\numberwithin{equation}{section}
\newtheorem{thm}{Theorem}[section]
\theoremstyle{definition}
\newtheorem{rem}[thm]{Remark}
\newtheorem{pc}[thm]{Particular case}}
\begin{document}
\allowdisplaybreaks

\renewcommand{\thefootnote}{}

\newcommand{\arXivNumber}{2306.11110}

\renewcommand{\PaperNumber}{079}

\FirstPageHeading

\ShortArticleName{About a Family of ALF Instantons with Conical Singularities}

\ArticleName{About a Family of ALF Instantons\\ with Conical Singularities\footnote{This paper is a~contribution to the Special Issue on Differential Geometry Inspired by Mathematical Physics in honor of Jean-Pierre Bourguignon for his 75th birthday. The~full collection is available at \href{https://www.emis.de/journals/SIGMA/Bourguignon.html}{https://www.emis.de/journals/SIGMA/Bourguignon.html}}}

\Author{Olivier BIQUARD~$^{\rm a}$ and Paul GAUDUCHON~$^{\rm b}$}

\AuthorNameForHeading{O.~Biquard and P.~Gauduchon}

\Address{$^{\rm a)}$~Sorbonne Universit\'e and Universit\'e Paris Cit\'e, CNRS, IMJ-PRG, F-75005 Paris, France}
\EmailD{\href{mailto:olivier.biquard@sorbonne-universite.fr}{olivier.biquard@sorbonne-universite.fr}}

\Address{$^{\rm b)}$~\'Ecole Polytechnique, CNRS, CMLS, F-91120 Palaiseau, France}
\EmailD{\href{mailto:paul.gauduchon@polytechnique.edu}{paul.gauduchon@polytechnique.edu}}

\ArticleDates{Received June 21, 2023, in final form October 10, 2023; Published online October 20, 2023}

\Abstract{We apply the techniques developed in our previous article to describe some interesting families of ALF gravitational instantons with conical singularities. In particular, we completely understand the 5-dimensional family of Chen--Teo metrics and prove that only 4-dimensional subfamilies can be smoothly compactified so that the metric has conical singularities.}

\Keywords{gravitational instantons; toric geometry; conformally K\"ahler metrics}

\Classification{53C25; 53C55}

\begin{flushright}
\begin{minipage}{65mm}
 \emph{Dedicated to Jean-Pierre Bourguignon on the occasion of his 75th birthday,\\
 with our admiration and gratitude.}
\end{minipage}
\end{flushright}

\renewcommand{\thefootnote}{\arabic{footnote}}
\setcounter{footnote}{0}

\section{Introduction} In a previous paper \cite{BG}, the authors of the present paper have provided a complete classification, as well as an effective mode of construction, of so-called {\it toric Hermitian ALF gravitational instantons}. These are four-dimensional, complete, non-compact oriented Ricci-flat Riemannian (positive definite, smooth) manifolds, which are toric, i.e., admits an effective metric action of the torus $\mathbb{T} ^2 = S^1 \times S^1$, are conformally K\"ahler -- but non-K\"ahler -- and, at infinity, are diffeomorphic to the product $\mathbb{R} \times L$, where $L$ is locally a $S^1$-bundle over the sphere $S^2$; the AF case is when $L = S ^2 \times S^1$.

This class of gravitational instantons includes the Riemannian versions, obtained by {\it Wick rotations}, of well-known Lorentzian space-times, namely (i) the {\it Schwarzschild space}, (ii) the family of {\it Kerr spaces}, (iii) the {\it self-dual Taub-NUT space}, equipped with the opposite orientation to the one induced by its hyperk\"ahler structure, and (iv) the {\it Taub-bolt space}, discovered in~1978 by Don Page \cite{P}. Apart from the Taub-NUT space, these spaces share the feature of being {\it of type~$D^+D^-$}, meaning that their self-dual and anti-self-dual Weyl tensors, $W^+$ and~$W^-$ respectively, are both {\it degenerate} and non-vanishing, hence giving rise to an {\it ambik\"ahler structure}, as defined in \cite{ACG}, in their conformal cases, cf.\ Section~\ref{sskaehler} below, see also \cite{dixon} for the Kerr spaces (in this terminology, the self-dual Taub-NUT space is of type $D^+O^-$, meaning that $W ^+$ is degenerate and non-vanishing, while $W ^- \equiv 0$).

It has been conjectured for a long time that no other $4$-manifold could be admitted in the family of toric Hermitian ALF gravitational instantons.
In 2011 however, a new 1-parameter family of toric ALF -- in fact AF -- gravitational instantons was discovered by Yu Chen and Edward Teo, \cite{CT2}, and these instantons were shown to be Hermitian by Steffen Aksteiner and Lars Andersson \cite{AA}. In contrast with the former examples, the {\it Chen--Teo instantons} are {\it not} derived from a Lorentzian space by a Wick rotation, and don't give rise anymore to an ambitoric structure, but nevertheless still admit a toric K\"ahler structure in their conformal class. Moreover, in view of the ALF condition, this toric K\"ahler structure, as well as the toric K\"ahler structure associated to any toric Hermitian ALF gravitational instanton, can be chosen to be locally close at infinity to the product of the standard sphere $S^2$, of curvature $1$, with a hyperbolic cusp, of curvature $-1$.

In \cite{BG}, it has been shown that, together with the above mentioned gravitational instantons, issuing from classical Lorentzian spaces, the Chen--Teo instanton provides the missing puzzle-piece needed for a full classification of the toric Hermitian ALF gravitational instantons. In this approach, the conformal K\"ahler structure plays a crucial role and, together with a recent ansatz due to Paul Tod, allows for a simple construction of these instantons, as explained in the next section. In particular, it provides a new, simple definition of the Chen--Teo instantons. As a~matter of fact, this approach first provides a general description of these metrics on the open set where the toric action is free. The actual construction of instantons then amounts to smoothly compactifying the manifold together with the metric along the invariant divisors, encoded by the edges of the moment polytope. This phase of the construction actually requires strong restrictions, and eventually ends up with the above mentioned complete classification.

A larger class is obtained by smoothly compactifying the manifold as above but by allowing {\it conical singularities} of the metric along some of them. Metrics of this kind will be called {\it regular}. Examples of such metrics can be found inside the well-known family of so-called {\it Kerr--Taub-NUT} metrics introduced by Gibbons--Perry in \cite{GP}, cf.\ \cite[Section~7.2.1]{BG}.

In 2015, Chen--Teo introduced a new 5-parameter family of toric ALF Ricci-flat metrics -- in fact, a 4-parameter family, if we ignore the scaling, as we shall do in the sequel -- including the above mentioned 1-parameter sub-family of Chen--Teo instantons.

Apart from the latter, none of them is smooth -- a consequence of the classification in \cite{BG}, see also Section~\ref{ss-smooth} below -- but some of them are regular, as defined above.
The main goal of this paper is to provide an alternative description of the Chen--Teo family, to detect inside this family the regular elements and the topology of their boundary at infinity, cf.\ Theorem \ref{thm-regular} below, and to describe some distinguished regular sub-families and their limits.

\section{Toric Hermitian ALF gravitational instantons: a quick review} In this section, we recall some main features of toric Hermitian ALF gravitational instantons, taken from~\cite{BG}.

\subsection{General presentation} It turns out, cf.\ \cite[Corollary 5.2]{BG}, that each of them is completely encoded by a positive, continuous, convex, piecewise affine function $f$ on $\mathbb{R}$ of the form
\begin{gather} \label{f} f (z) = A + \sum _{i = 1} ^r a_i |z - z_i|, \end{gather}
for some positive integer $r$, where $z$ denotes the standard parameter of $\mathbb{R}$, $A$ is a positive real number, the coefficients $a _i$ are positive real numbers, with $\sum _{i = 1}^r a_i = 1$, and the $z_i$, $i = 1, \dots, r$, called {\it angular} or {\it turning} points, denote the points of discontinuity on $\mathbb{R}$ of the slope of $f$. For convenience, we put: $z _0 := - \infty$ and $z_{r + 1} := + \infty$, cf.\ Figure~\ref{fig:affine} for the piecewise affine function of the Chen--Teo instanton. On each open interval $(z_i, z_{i + 1})$, $i = 0, \dots, r$, the slope of $f$ is constant, denoted by $f' _i$. It is required that
\[
 f'_0 = - 1 < f'_1 < \dots < f _{r-1} < f'_r = 1.
 \]
The coefficients $a_i$ are related to the slopes $f'_i$ by
\[ a _i = \frac{1}{2} (f'_i - f'_{i - 1}), \qquad i = 1, \dots, r.\]

According to the {\it Tod ansatz}, cf.\ \cite{T} and also \cite[Section~3]{BG}, the geometry of a toric Hermitian Ricci-flat metric is determined by a harmonic, axisymmetric (real) function $U = U(\rho, z)$, defined on the Euclidean space $\mathbb{R} ^3$, with the following notation: if $u_1$, $u_2$, $u_3$ denote the standard coordinates of $\mathbb{R} ^3$, the pair $(\rho, z)$ -- the so-called {\it Weyl--Papapetrou coordinates} -- are defined by \smash{$\rho := \big(u_1 ^2 + u _2 ^2\big)^{\frac{1}{2}}$}, $z = u _3$; $U$ being axisymmetric means that it is invariant by the $S^1$-action: ${\rm e}^{{\rm i} \theta} \cdot u = (\cos{\theta} u_1 + \sin{\theta} u_2, - \sin{\theta} u_1 + \cos{\theta} u_2, u_3)$, hence is a function of $\rho$, $z$, and the condition of being harmonic is then expressed by $U_{z z} + U_{\rho \rho} + \frac{1}{\rho} U_{\rho} = 0$, where, as usual, $U_{\rho}$, $U_z$, $U _{\rho, z}$ etc.\ denote the partial derivatives with respect to $\rho$ and $z$.
For any such {\it generating function} $U$, the corresponding metric is then given, in the {\it Harmark form}~\cite{CT1,harmark}, by
\begin{gather} \label{g-harmark} g = \frac{({\rm d}t - F {\rm d} x _3) ^{\otimes 2}}{V} + V \rho ^2 {\rm d} x _3 \otimes {\rm d} x _3 + {\rm e}^{2 \nu} ({\rm d} \rho \otimes {\rm d} \rho + {\rm d} z \otimes {\rm d} z), \end{gather}
on the open set, where $\rho \neq 0$,
where $t$, $x_3$ are angular coordinates, and $V$, $F$, ${\rm e}^{2 \nu}$ are functions of $\rho$, $z$, defined by
\begin{gather} V = - \frac{1}{k} \left(\rho U_{\rho} + \frac{U_{\rho} ^2 U_{z z}}{U_{\rho z} ^2 + U _{z z} ^2}\right), \qquad {\rm e}^{2 \nu} = \frac{1}{4} V \rho ^2 \big(U_{\rho z} ^2 + U_{z z} ^2\big), \label{V-nu}\\
F = - \frac{1}{k} \left(- \frac{\rho U_{\rho} ^2 U_{\rho z}}{U_{\rho z}^2 + U_{z z} ^2} + \rho ^2 U_z + 2 H\right), \label{F} \end{gather}
where $H$, the {\it conjugate function} of $U$, is defined, up to an additive constant, by
\begin{gather} \label{H} H _z = \rho U _{\rho}, \qquad H _{\rho} = - \rho U_z, \end{gather}
cf.\ Section~\ref{sskaehler} for the significance of the constant $k$.

The functions $H$ and $F$ are both defined up to an additional constant. Indeed, in the expression \eqref{g-harmark} of the metric, the $1$-form $\eta = {\rm d}t - F {\rm d} x _3$ is well defined, but the pair $(t, F)$ is subject to the transform $(t, F) \mapsto (t + c x_3, F + c)$, for any constant $c$, by which $\eta$, the vector field $\partial _t$ and $x _3$ remain unchanged, while the vector field $\partial _{x_3}$ becomes $\partial _{x _3} - c \partial _t$. In particular, the vector field $\partial _{x_3} + F \partial _t$ remains unchanged.

In the current ALF case, it was shown in \cite[Section~5]{BG} that the generating function $U$ of any toric Hermitian ALF gravitational instanton is defined on the whole space $\mathbb{R} ^3$, except on the $z$-axis $\rho = 0$, that, near the $z$-axis, $U$ is close to $f (z) \log{\rho ^2}$, while, at infinity, it is asymptotic to the harmonic axisymmetric function $U_0$ defined by
\[%\label{U-0}
 U _0 (\rho, z) = 2 \big(\rho ^2 + z ^2\big) ^{\frac{1}{2}} - z \log{\frac{\big(\rho ^2 + z^2\big) ^{\frac{1}{2}} + z}{\big(\rho ^2 + z^2\big) ^{\frac{1}{2}}- z}}.\]
It follows that the generating function $U$ of any toric Hermitian ALF gravitational instanton is actually entirely determined by the above piecewise affine function $f (z)$, via the formula
\begin{gather} \label{U} U (\rho, z) = A \log{\rho ^2} + \sum _{i = 1} ^r a _i U_0 (\rho, z - z_i). \end{gather}

By setting
\[ d_i := \big(\rho ^2 + (z - z_i)^2\big)^{\frac{1}{2}}, \]
and by noticing that the constant $k$ in \eqref{V-nu}--\eqref{F} is equal to $2 A$, cf.\ below, we get the following expressions of $U$, its first and second derivatives, and $H$:
\begin{gather} U (\rho, z) = A \log{\rho ^2} + 2 \sum _{i = 1}^r a_i d_i - \sum _{i = 1}^r a _i (z - z_i) \log{\frac{(d_i + z - z_i)}{(d_i - z + z_i)}},\label{U-def} \\
U_{\rho} = \frac{2}{\rho} \bigg(A + \sum _{i = 1}^r a_i d_i\bigg), \qquad U_z = - \sum _{i = 1}^r a_i \log{\frac{(d_i + z - z_i)}{(d_i - z + z_i)}}, \label{U-rho-z} \\
 U_{\rho \rho} = - \frac{2}{\rho ^2} \bigg(A + \sum _{i = 1} ^r a _i d _i\bigg) + 2 \sum _{i = 1} ^r \frac{a_i}{d_i}, \qquad U_{\rho z} = \frac{2}{\rho} \sum _{i = 1} ^r a_i \frac{(z - z_i)}{d_i}, \qquad
U _{z z} = - 2 \sum _{i = 1} ^r \frac{a _i}{d_i},\nonumber%\label{U-rhoz-zz}
 \end{gather}
and
\[% \label{H-def}
H (\rho, z) = 2 A z + \sum _{i = 1}^r a _i (z - z_i) d _i + \frac{1}{2} \rho ^2 \sum_{i = 1}^r a _i \log{\frac{(d_i + z - z_i)}{(d_i - z + z_i)}}, \]
up to constant.
We then get
\begin{gather}
 V = \frac{1}{A} \biggl(A + \sum _{i = 1} ^r a_i d_i\biggr) \left(\frac{\bigl(\sum _{i = 1}^r \frac{a_i}{d_i}\bigr) \bigl(A + \sum _{i = 1} ^r a_i d_i\bigr)}{\bigl(\sum _{i = 1}^r \frac{a _i (z - z_i)}{d_i}\bigr) ^2 + \bigl(\sum _{i = 1}^r \frac{a _i \rho}{d_i}\bigr)^2} - 1\right), \label{V-def} \\
 {\rm e}^{2 \nu} = \frac{1}{A} \biggl(A + \sum _{i = 1} ^r a_i d_i\biggr) \biggl(\sum _{i = 1}^r \frac{a_i}{d_i} \biggl(A + \sum _{i = 1} ^r a_i d_i\biggr) - \biggl(\biggl(\sum _{i = 1}^r \frac{a _i (z - z_i)}{d_i}\biggr) ^2 \nonumber\\
 \hphantom{{\rm e}^{2 \nu} =}{}+ \biggl(\sum _{i = 1}^r \frac{a _i \rho}{d_i}\biggr)^2\biggr)\biggr), \label{nu-def}
 \end{gather}
and
\begin{gather} \label{F-def} F = \frac{1}{A} \left(\frac{\bigl(A + \sum _{i = 1} ^r a_i d_i\bigr)^2 \bigl(\sum _{i = 1}^r \frac{a _i (z - z_i)}{d_i}\bigr)}{\bigl(\sum _{i = 1}^r \frac{a _i (z - z_i)}{d_i}\bigr) ^2 + \bigl(\sum _{i = 1}^r \frac{a _i \rho}{d_i}\bigr)^2} - 2 A z - \sum _{i = 1}^r a_i (z - z_i) d_i\right). \end{gather}

It is easy to show that
\[\bigg(\sum _{i = 1} ^r \frac{a _i (z - z_i)}{d_i}\bigg)^2 + \bigg(\sum _{i = 1} ^r \frac{a _i \rho}{d_i}\bigg)^2 \leq 1,
\qquad \sum _{i = 1} ^r \frac{a_i}{d_i} \sum _{i = 1} ^r a _i d _i \geq 1.
\]
It then readily follows that
\[ V \geq 1 + A \sum _{i = 1} ^r \frac{a_i}{d_i}, \]
and that $V$ tends to $1$ at infinity.

On the $z$-axis $\rho = 0$, for any $z$ where $f'(z) \neq 0$, we infer
\begin{gather} V (0, z) = \frac{1}{A} \frac{f (z)}{(f' (z))^2} \left(f (z) \sum _{i = 1}^r \frac{a_i}{|z - z _i|} - (f' (z))^2\right),\label{V-0}\\
{\rm e}^{2 \nu} (0, z) = (f' (z)) ^2 V(0, z), \nonumber\\ %\label{nu-0}
F (0, z) = \frac{1}{A} \left(\frac{(f (z))^2}{f' (z)} - 2 A z - \sum _{i = 1}^r a _i (z - z_i) |z - z _i|\right) = \frac{1}{A} \left(\frac{(f (z))^2}{f' (z)} - H (0, z)\right). \label{F-0}
 \end{gather}
From \eqref{F-0}, we infer that on any interval $(z_i, z_{i + 1})$, $i = 0, \dots, r$, {\it $F (0, z)$ is constant}, say equal to $F _i$. If $f' _i \neq 0$ and $f' _{i - 1} \neq 0$, since $H$ is continuous on the $z$-axis, we then have
\[%\label{F-i}
 F_i - F_{i - 1} = \frac{1}{A} f_i ^2 \left(\frac{1}{f'_i} - \frac{1}{f'_{i - 1}}\right); \]
if, however, $f'_i = 0$, then $f'_{i - 1} \neq 0$ and $f'_{i + 1} \neq 0$ and we then get
\[
F_{i + 1} - F_{i - 1} = \frac{1}{A} \left(f_i ^2 \left(\frac{1}{f'_{i + 1}} - \frac{1}{f'_{i - 1}}\right) - 2 (z _{i + 1} - z _i) f_i\right), \]
cf.\ \cite[Propsition 7]{BG}.
From \eqref{F-0} again, we get
\begin{gather*}F _0 = - \frac{1}{A} \bigg(A + \sum _{i = 1} ^r a _i z _i\bigg) ^2 + \frac{1}{A } \sum _{i = 1} ^r a _i z _i ^2, \\
F _r = \frac{1}{A} \bigg(A - \sum _{i = 1} ^r a _i z _i\bigg) ^2 - \frac{1}{A } \sum _{i = 1} ^r a _i z _i ^2, \end{gather*}
up to an additional constant, hence
\begin{gather} \label{Fr-F0} F_r - F_0 = \frac{2}{A} \bigg(A ^ 2 + \bigg(\sum _{i = 1} ^r a _i z _i\bigg) ^2 - \sum _{i = 1} ^r a_i z _i ^2\bigg). \end{gather}
It follows that
\[ A = \frac{1}{4} \bigg(F_r - F_0 + \bigg((F_r - F_0) ^{\frac{1}{2}} + 16 \sum _{i = 1} ^r a _i z _i ^2 - 16 \bigg(\sum _{i = 1} ^r a _i z _i\bigg) ^2\bigg) ^{\frac{1}{2}}\bigg). \]
In particular, the metric is AF, i.e., satisfies $F_r - F _0 = 0$, if and only if
\[
 A ^2 = \sum _{i = 1}^r a _i z _i ^2 - \bigg(\sum _{i = 1} ^r a _i z _i\bigg) ^2. \]

\begin{rem} \label{rem-change} As explained above, to any convex, piecewise affine function $f (z)$ as defined in~\eqref{f} is associated a generating function $U (\rho, z)$, defined by \eqref{U}, hence by \eqref{U-def}; conversely, it follows from \eqref{U-rho-z} that $f (z)$ is determined by $U (\rho, z)$ via the formula $f (z) = \frac{1}{2} (\rho U_{\rho}) (0, z)$. The corresponding Ricci-flat metric $g$ is then expressed by \eqref{g-harmark}, where the functions $V (\rho, z)$, $F (\rho, z)$, ${\rm e}^{2 \nu} (\rho, z)$ are given by \eqref{V-nu}--\eqref{F}, hence by \eqref{V-def}--\eqref{F-def}. For any real constants $\alpha > 0$,~$\beta$, $f (z)$ may be replaced by the function $\tilde{f} (z) := \frac{1}{\alpha} f (\alpha z + \beta) = \frac{A}{a} + \sum _{i = 1}^r a _i |z - \tilde{z}_i|$, with~$\tilde{z}_i = \frac{z_i - \beta}{\alpha}$, and the corresponding generating function is then replaced by
\[\tilde{U} (\rho, z) = \frac{1}{a} U (\alpha \rho, \alpha z + \beta) = \frac{A}{a} \log{\rho ^2} + \sum _{i = 1} ^r a _i U_0 (\rho, z - \tilde{z} _i).\]
 The corresponding Ricci-flat metric is then $\tilde{g} (\rho, z, t, x_3) := \frac{1}{\alpha ^2} g (\alpha \rho, \alpha z + \beta, \alpha t, x _3)$, meaning that~$\tilde{g}$ is homothetic to $g$ by a factor $1/\alpha ^2$, via the change of variables $(\rho, z, t, x _3) \mapsto (\alpha \rho, \alpha z + \beta, \alpha t, x_3)$. Also notice that, by denoting $\tilde{f} _i := \tilde{f} (\tilde{z} _i)$ and $f_i : = f (z _i)$, we have
$\tilde{f} _i = f_i/\alpha$, $i = 1, \dots, r$. \end{rem}

\subsection{The K\"ahler environment} \label{sskaehler}
By definition, a toric Hermitian ALF gravitational instanton, say $(M, g)$, admits a K\"ahler metric,~$g _K$, in the conformal class of $g$, which is actually toric as well, meaning that the torus action is Hamiltonian, i.e., admits a {\it moment map} The fact that $g$ is conformally K\"ahler implies that the self-dual Weyl tensor $W ^+$ of $g$, regarded as a (symmetric, trace-less) operator on the self-dual part of $\Lambda ^2 M$, is {\it degenerate}, meaning that $W^+$ has a simple, non-vanishing, simple eigenvalue, say $\lambda$, and a repeated eigenvalue $- \frac{\lambda}{2}$. It is also required that $\lambda$ is not constant and everywhere non-vanishing. According to \cite{derdzinski}, the K\"ahler metric $g _K$ is then equal to $\lambda ^{2/3} g$ or a constant multiple. In view of the ALF condition, it is convenient to set: \smash{$g _K = \big(k ^{-1} \lambda\big) ^{2/3} g$,} where the constant $k$ is chosen in such a way that
$g_K$ is asymptotic at infinity to the product of the standard sphere of radius $1$ and the Poincar\'e cusp of sectional curvature $-1$ (more detail in \cite[Section~2]{BG}).

The conformal factor \smash{$\big(k ^{-1} \lambda\big) ^{\frac{2}{3}}$} is then equal to $x _1 ^2$, where $x _1$ denotes the moment of the Hamiltonian Killing vector field $\partial _t$, and
the scalar curvature, ${\rm Scal} _{g_K}$, of the K\"ahler metric~$g _K$~is then equal to \smash{$6 k ^{\frac{2}{3}} \lambda ^{\frac{1}{3}}= 6 k x _1$}, and tends to $0$ at infinity. In particular, $g_K = x _1 ^2 g$ is {\it extremal}, even {\it Bach-flat}, since it is conformal to an Einstein metric. The constant $k$ is actually the same as the constant $k$ appearing in \eqref{V-nu}--\eqref{F}, and turns out to be equal to $2 A$ \cite[equation (89)]{BG}.

In terms of the generating function $U$, the K\"ahler form, $\omega _K$, and the volume form, $v _{g _K}$, of~$g_K$ have the following expression:
\[ \label{omega-K } \omega _K = \frac{2}{U_{\rho} ^2} \Big(\frac{1}{\rho} (U_{z z} {\rm d}\rho - U_{\rho z} {\rm d}z) \wedge ({\rm d}t - F {\rm d} x_3) - V (U_{\rho z} {\rm d} \rho + U_{z z} {\rm d}z) \wedge {\rm d} x _3\Big), \]
and
\begin{gather} \label{vgK} v _{g _K} = \frac{1}{2} \omega _K \wedge \omega _K = \frac{4}{\rho U_{\rho} ^4} \big(U_{\rho z} ^2 + U_{z z} ^2\big) {\rm d}t \wedge {\rm d} x _3 \wedge {\rm d} z \wedge {\rm d} \rho. \end{gather}
From \eqref{vgK}, we can infer that the volume of $(M, g _K)$ is finite, and the image of the moment map is a convex, pre-compact polytope in the Lie algebra $\mathfrak{t}$ of the torus $\mathbb{T} ^2$, cf.\ Figure \ref{fig:polytope}, which is the picture, taken from \cite[Section~8]{BG}, of the moment polytope of the Chen--Teo instanton. Notice that, apart from the dashed edge $E _{\infty}$, representing the boundary at infinity, each edge~$E _i$,~${i = 0, 1, \dots, r}$ is associated to the interval $(z _i, z _{i + 1})$ on the $z$-axis $\rho = 0$.

The moment with respect of $\omega _K$ of the Killing vector fields $\partial _t$ and $\partial _{x_3}$ -- which, in general, don't form a basis of $\Lambda$ -- are denoted by $x _1$ and $\mu$ respectively, with
\[% \label{x1-mu}
 x _1 = \frac{2}{H_z}, \qquad \mu = - \frac{1}{A} \frac{z H _z + \rho H_{\rho} - 2 H}{H_z}, \]
where, we recall, $H$ is defined by \eqref{H} \cite[Proposition 6.1]{BG}. Notice however that in general~$\partial _t$ and $\partial _{x_3}$, regarded as elements of the Lie algebra $\mathfrak{t}$ of the torus $\mathbb{T} ^2$ don't form a basis of the lattice~$\Lambda$ in $\mathfrak{t}$ induced by $\mathbb{T} ^2$. In restriction to the boundary $\rho = 0$, the moments are functions of $z$, with the following expressions on each interval $(z _i, z _{i + 1})$, where $f'_i \neq 0$,
\[%\label{x1-mu-0}
 x_1 = \frac{1}{f (z)}, \qquad \mu = - \frac{F_i}{f (z)} + \frac{1}{A} \left(\frac{f (z)}{f' (z)} - z\right). \]

The expression \eqref{g-harmark} of the metric $g$ holds on the open set, $M _0$, where $\rho \neq 0$, i.e., where the torus action is free. In the toric K\"ahler setting, the boundary $\rho = 0$ of this open set is formed of $(r - 1)$ compact invariant divisors, isomorphic to $2$-spheres, and of two divisors isomorphic to punctured spheres, corresponding to a point at infinity for each of them, encoded by the $r + 1$ edges of the moment polytope. To each edge of the moment polytope, itself encoded by an interval $(z_i, z_{i = 1})$, $i = 0, \dots, r$, is associated a Killing vector field, $v_i$, regarded as an element of~$\mathfrak{t}$, actually a primitive element of $\Lambda$: $v_i$ is then the generator of a $S^1$-action of period $2 \pi$, and vanishes on the corresponding invariant divisor. It follows from \eqref{V-0} that $v _i$ has the following form:
\begin{gather} v _i = f'_i (\partial _{x_3} + F _i \partial _t) \qquad \text{if}\ f'_i \neq 0, \qquad
 v _i = \frac{1}{A} f_i ^2 \partial _t \qquad \text{if}\ f'_i = 0. \label{v}
\end{gather}
 More generally, if the metric admits a conical singularity along the invariant divisor $E _i$, of angle~$2 \pi \alpha _i$, then
 \begin{gather}
 v _i = \alpha _i f'_i (\partial _{x_3} + F _i \partial _t) \qquad \text{if}\ f'_i \neq 0, \qquad
 v _i = \frac{1}{A} \alpha _i f_i ^2 \partial _t \qquad \text{if}\ f'_i = 0.\label{v-alpha}
\end{gather}

The conditions that $(M_0, g)$ will smoothly extend to the boundary, possibly with conical singularities of $g$ on the invariant divisors, is that each pair $v _i$, $v_{i + 1}$ be a basis of the lattice~$\Lambda$, i.e., that each pair be related to the next one by an element of the group ${\rm GL} (2, \mathbb{Z})$ of $2 \times 2$ matrices with integer coefficients and determinant equal to $ \pm 1$, i.e.,
\[ \begin{pmatrix} v_{i - 1} \\ v_i \end{pmatrix} = \begin{pmatrix} \ell _i & - \epsilon _i \\ 1 & 0 \end{pmatrix} \begin{pmatrix} \label{v-matrix} v _i \\ v _{i + 1} \end{pmatrix}, \]
hence
\[% \label{v-ell}
\ell _i v _i = v_{i - 1} + \epsilon _i v _{i + 1}, \qquad i = 1, \dots, r - 1, \]
where the $\ell _i$ are integers and $\epsilon = \pm 1$, cf.\ \cite[Section~8]{BG}.

As already mentioned, these conditions turn out to be quite restrictive, in particular impose that $r$ cannot exceed $3$. For each value $1$, $2$ or $3$ of $r$, the only toric Hermitian ALF gravitational instantons are then as follows, cf.\ Theorems A and~8.2 in \cite{BG}:
\begin{itemize}\itemsep=0pt
\item $r = 1$: The {\it self-dual Taub-NUT instanton}, i.e., the Euclidean self-dual Taub-NUT on $\mathbb{R} ^4$, with the orientation opposite to the one induced by its hyperk\"ahler structure. Its piecewise affine function is $f (z) = 2 n + |z|$ and its generating function is $U (\rho, z) = 2 n \log{\rho ^2} + U _0 (\rho, z)$.
\item $r = 2$: \begin{itemize}\itemsep=0pt
\item[(i)] The {\it Taub-bolt instanton}, discovered by D. Page in 1978, whose piecewise affine function is $f (z) = 3 b + \frac{1}{2} |z + b| + \frac{1}{2} |z - b|$, $b = \frac{3}{4} |n|$.
 \item[(ii)] The {\it Euclidean Kerr metrics}, discovered by R. Kerr in 1963, with
 \[f (z) = m + \frac{1}{2} \left(1 - \frac{a}{b}\right) |z + b| + \frac{1}{2} \left(1 + \frac{a}{b}\right) |z - b|, \qquad 0 < |a| < b = \big(m ^2 + a^2\big) ^{\frac{1}{2}}.\]
 \item[(iii)] The {\it Euclidean Schwarzschild metric}, discovered by K. Schwarzschild in 1918, with $f (z) = m + \frac{1}{2} |z + m| + \frac{1}{2} |z - m|$, which can be viewed as a particular case of Euclidean Kerr metric, with $a = 0$ and $b = m$.
\end{itemize}
 \item $r = 3$: The 1-parameter family of {\it Chen--Teo instantons}, discovered by Yu Chen and Edward Teo in 2011, cf.\ \cite{CT2}, with
 \begin{gather} f (z) = \frac{1}{2} \big(1 - p ^{\frac{3}{2}} - q ^{\frac{3}{2}} + q |z + q ^{\frac{1}{2}} - q| + |z| + p |z - p ^{\frac{1}{2}} + p|\big), \label{f-CT}\\
 0 < p < 1, \qquad 0 < q < 1, \qquad p + q = 1, \label{pq-CT} \\
 f _1 = p q ^{\frac{1}{2}}, \qquad f_2 = p q, \qquad f_3 = p^{\frac{1}{2}} q.\label{fi-CT}
 \end{gather}
 In contrast with the previous cases, the Chen--Teo instantons are not the Euclidean form of Lorentzian spaces, and their anti-self-dual Weyl tensor $W ^-$ is {\it not} degenerate, as shown in \cite{AA}.
\end{itemize}

\begin{figure}[t]\centering
\begin{tikzpicture}
\draw[step = 1cm,blue,very thin] (-4,-1) grid (4,4);
\draw[very thick,-](-4,-1)--(4,-1);
\draw[fill] (-2,-1) circle[radius = 0.1];
\node[below] at (-2,-1) {$z_1 = q - q^{\frac{1}{2}}$};
\node[below] at (-2,0.4) {$f_1 = p q^{\frac{1}{2}}$};
\draw[fill] (1.5,-1) circle[radius = 0.1];
\node[below] at (1.7,-1) {$z_3 = p^{\frac{1}{2}} - p$};
\draw[fill] (1.5,0.7) circle[radius = 0.1];
\node[below] at (1.8,0.6) {$f _3 = p^{\frac{1}{2}} q$};
\draw[fill] (0,0) circle [radius = 0.1];
\node[below] at (-0,-0.1) {$f _2 = p q$};
\draw[thick,red, -] (1.5,0.7)--(4,3.2);
\draw[fill] (-2,0.5) circle[radius = 0.1];
\node[below] at (-2,0.5) {};
\draw[thick, red, -] (-2,0.5)--(0,0);
\draw[thick,red, -] (-4,2.5)--(-2,0.5);
\draw[thick,red, -] (0,0)--(1.5,0.7);
\node[above] at (0.5,0.4) {$f'_2 = q$};
\node[below] at (-2.3,1.9) {$f'_0 = -1$};
\node[below] at (-1,1) {$f'_1 = - p$};
\draw[fill] (0,-1) circle[radius = 0.1];
\node[below] at (0,-1.05) {$z_2 = 0$};
\node[above] at (2.1,1.6) {$f'_3 = 1$};
\end{tikzpicture}
\caption{The piecewise affine function of the Chen--Teo metric.}\label{fig:affine}
\end{figure}
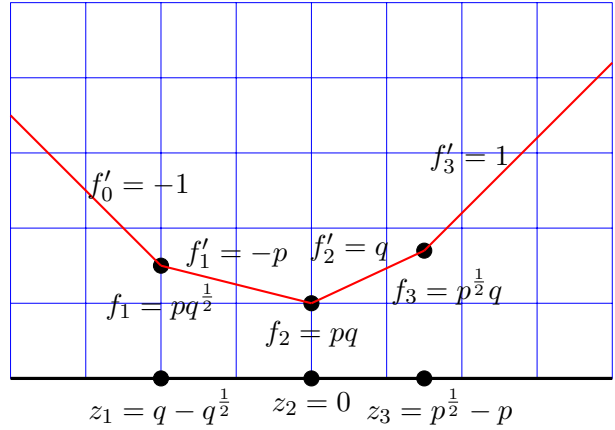

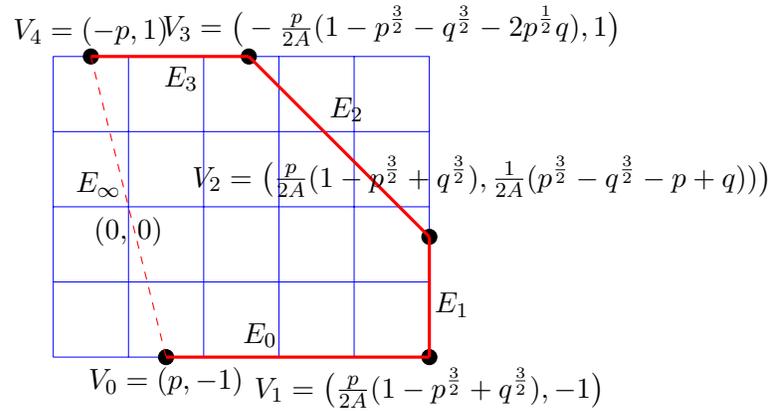
\begin{figure}[t]\centering
\begin{tikzpicture}
\draw[step=1cm, blue, very thin] (-2, -2) grid (3,2);
\draw[fill] (-1.5,2) circle [radius=0.1];
\draw[fill] (0.6,2) circle [radius=0.1];
\draw[fill] (-0.5,-2) circle [radius=0.1];
\draw[fill] (3,- 0.4) circle [radius=0.1];
\draw[fill] (3,-2) circle [radius=0.1];
\draw[dashed, red, -] (-1.5, 2) -- (-0.5, -2);
\draw[very thick, red, -] (-1.5, 2) -- (0.6, 2);
\draw[very thick, red, -] (3, -0.4) -- (3, -2);
\draw[very thick, red, -] (-0.5, -2) -- (3, -2);
\draw[very thick, red, -] (3,-0.4) -- (0.6,2);
\node [below] at (-1, 0) {(0, 0)};
\node [above] at (- 1.5, 2) {$V_4= (- p,1)$};
\node [above] at (2.5, 2) {$V_3 = \big(- \frac{p}{2 A} (1 - p ^{\frac{3}{2}} - q ^{\frac{3}{2}} - 2 p ^{\frac{1}{2}} q), 1\big)$};
\node [below] at (-0.5, -2) {$V_0 = (p,-1)$};
\node [below] at (3, -2) {$V_1 = \big(\frac{p}{2A} (1 - p^{\frac{3}{2}} + q ^{\frac{3}{2}}), - 1\big)$};
\node[above] at (3.7, - 0) {$V_2 = \big(\frac{p}{2 A} (1 - p^{\frac{3}{2}} + q ^{\frac{3}{2}}), \frac{1}{2 A} (p^{\frac{3}{2}} - q^{\frac{3}{2}} - p + q))\big)$};
\node[above] at (0.75,- 2) {$E_0$};
\node[above] at (3.3,- 1.6) {$E_1$};
\node[above] at (1.9,1) {$E_2$};
\node[below] at (- 0.3, 2) {$E_3$};
\node[above] at (- 1.4, 0) {$E_{\infty}$};
\end{tikzpicture}
\caption{The Chen--Teo moment polytope in the $x, y$-plane, with respect to the $\mathbb{Z}$-basis $v_1 = - p (\partial _{x_3} + F_1 \partial _t)$, with $F_0 = 0$, where $x = - p (y + F _1 x_1)$, $y = \mu + \frac{1}{2 A} \big(p^{\frac{3}{2}} - q ^{\frac{3}{2}} - p + q\big)$, and $2 A = 1 - p ^{\frac{3}{2}} - q ^{\frac{3}{2}}$. The slope of the edge between $V_2$ and $V_3$ is $- 1$ for any value of the parameter $p$.}
\label{fig:polytope}
\end{figure}

More generally, we shall consider smooth completions of the metric $g$ given by \eqref{g-harmark} admitting conical singularities along the invariant divisors, as described by Theorem 7.5 in \cite[Section~7]{BG}. As shown in \cite[Section~7]{BG}, this can be done for the whole family of {\it Kerr--Taub--NUT} family, introduced by G.W.~Gibbons and M.J.~Perry in 1980, which includes the instantons mentioned above when $r = 2$.

This also concerns the {\it Chen--Teo {\rm 4}-parameter family} introduced in \cite{CT3}, which we shall explore in the next section.

\subsection{The self-dual Eguchi--Hanson metric}\label{sec:self-dual-eguchi} The {\it Eguchi--Hanson metric} was first discovered by Tohru Eguchi and Andrew J. Hanson in \cite{EH}, and by Eugenio Calabi in \cite{calabi}; it is also a member of the Gibbons--Hawking family of hyperk\"ahler metrics \cite{GH}. Like the Taub-NUT metric quoted above, also a member of the Gibbons--Hawking family, the Eguchi--Hanson metric is of type $O^+ D^-$ with respect to the orientation determined by the hyperk\"ahler structure, meaning that $W^+ \equiv 0$, while $W^-$ is degenerate, but non-zero. With respect to the opposite
orientation it is then of type $D^+O^-$ and will then be called {\it the self-dual Eguchi--Hanson metric}. This can be written in Harmark form \eqref{g-harmark}, with \smash{$\rho = \big(r^2 - b^2\big)^{\frac{1}{2}} \sin{\theta}$}, $z = r \cos{\theta}$, $V = \frac{r}{r^2 - b^2}$, $F = - \cos{\theta}$, ${\rm e}^{2 \nu} = \frac{r}{r^2 - b^2 \cos ^2{\theta}}$. It can be shown that the simple eigenvalue of $W^+$ is $\lambda _+ = \frac{2 b ^2}{r ^3}$ and the conformal K\"ahler metric is then conveniently chosen to be $g _K = \frac{1}{r^2} g$, whose K\"ahler class is then $\omega _K = - \frac{{\rm d}r}{r^2} \wedge ({\rm d}t + \cos{\theta} {\rm d} x_3) - \frac{1}{r} \sin{\theta} {\rm d}\theta \wedge {\rm d} x_3$, so that the moment $x_1$ of $\partial _t$ be equal to $\frac{1}{r}$ and $k = 2 b^2$.
Unlike the self-dual Taub-NUT metric, the self-dual Eguchi--Hanson metric is ALE, not ALF, but its generating function, $U_{\rm EH}$, is nevertheless of the same type \eqref{U} as the generating functions of the gravitational instantons considered in this note, namely
\begin{align*} U_{\rm EH} (\rho, z) & = \frac{1}{2} U_0(\rho, z + b) + \frac{1}{2} U_0 (\rho, z - b) \\ & = d_1 - \frac{1}{2} (z + b) \log{\frac{d_1 + z + b}{d_1 - z - b}} + d_2 - \frac{1}{2} (z - b) \log{\frac{d_2 + z - b}{d_2 - z + b}}, \end{align*}
with $d _1 = \big(\rho ^2 + (z + b) ^2\big)^{\frac{1}{2}}$ and $d_2 = \big(\rho ^2 + (z - b) ^2\big)^{\frac{1}{2}}$, and the corresponding piecewise affine function is then
\[ f_{\rm EH} (z) = \frac{1}{2} |z + b| + \frac{1}{2} |z - b|. \]
It may be observed that the positive constant $A$ appearing in the general expression \eqref{f} is here equal to $0$ and that the identity $k = 2 A$ is here no longer valid, showing again that the self-dual Eguchi--Hanson metric does not belong to the family of gravitational instantons considered in this paper. It may however be viewed as a limit, as already observed by Don Page in \cite{P}, cf.~also \cite[Section~7.2]{BG}.
In the current setting, this can be viewed by considering the following one-parameter family of metrics encoded by their piecewise affine functions of the form
\[ f _A (z) = A + \frac{1}{2} |z + b| + \frac{1}{2} |z - b|, \]
 normalized by the condition $A + b = 1$, cf.\ Remark \ref{rem-change}, with $A, b \geq 0$; notice that $f_1 = f _2 = 1$, and that most metrics in this family have conical singularities along invariant divisors, of angles $2 \pi \alpha _i$, $i = 1, 2, 3$. The vector fields attached to the corresponding polytopes, cf.\ \eqref{v}--\eqref{v-alpha}, are then $v_0 = - \alpha _0 (\partial _{x_3} + F _0 \partial _t)$, $v _1 = \alpha _1 \frac{1}{A} \partial _t$, $v _2 = \alpha _2 (\partial _{x_3} + F _2 \partial _t)$, and the regularity condition is then:
 $\left(\begin{smallmatrix} v_0 \\ v_1\end{smallmatrix}\right) = \left(\begin{smallmatrix} \ell & - \epsilon \\ 1 & 0 \end{smallmatrix}\right) \left(\begin{smallmatrix} v_1 \\ v_2 \end{smallmatrix}\right)$, for some integer $\ell$ and $\epsilon = \pm 1$; we then have $\alpha _0 = \epsilon \alpha _2$, hence $\epsilon = 1$, $\alpha _0 = \alpha _2$, and we can actually assume $\alpha _0 = \alpha _2 = 1$, and $\ell \alpha = A (F _2 - F_0)$, by setting $\alpha _1 = \alpha$, hence, by \eqref{Fr-F0}, $\ell \alpha = 2 \big(A^2 - b^2\big) = 2 (A - b) = 2 (2 A - 1)$, where we can assume that $\ell$ is equal to~$1$, $0$ or $-1$. When $A$ runs in the open interval $(0, 1)$, the corresponding metric is smooth in the following three cases: $\big(A = \frac{3}{4}, b = \frac{1}{4}, \ell = 1\big)$, $\big(A = \frac{1}{2}, b = \frac{1}{2}, \ell = 0\big)$ and $\big(A = \frac{1}{4}, b = \frac{3}{4}, \ell = -1\big)$, corresponding to the ``positive'' Taub-bolt metric, the Schwarzschild metric and the ``negative'' Taub-bolt metric respectively.\footnote{The ``positive'' and the ``negative'' Taub-bolt metrics are actually the same metric on the same manifold, namely the complex projective plane $\mathbb{CP}^2$ with a deleted point, with however opposite orientations, hence two different conformal K\"ahler structures: the ``positive'' Taub-bolt has the natural orientation of the tautological line bundle $\mathcal{O}(-1)$ over $\mathbb{CP} ^1$, the ``negative'' one the natural orientation of the dual line bundle $\mathcal{O}(1)$. Similarly, the hyperk\"ahler Eguchi--Hanson metric lives on the oriented manifold $\mathcal{O} (- 2)$, while the self-dual Eguchi--Hanson lives on the dual line bundle $\mathcal{O}(2)$.}

 When $A\in\big(0,\frac12\big)$ we can take $\ell=-1$, the topology is that of the negative Taub-bolt metric (the total space of $\mathcal{O}(1)$), the angle $4\pi(1-2A)$ goes from $0$ when $A\rightarrow\frac12$ to $4\pi$ when $A\rightarrow0$, hence the metric tends to the pull-back, from $\mathcal{O}(2)$ to $\mathcal{O}(1)$, of the self-dual Eguchi--Hanson metric, with a conical singularity of angle $4 \pi$. When $A\in\big(\frac12,1\big)$, we have $\ell=1$, the topology is that of the positive Taub-bolt metric (the total space of $\mathcal{O}(-1)$), again the angle $4\pi(2A-1)$ goes from $0$ when $A\rightarrow\frac12$ to $4\pi$ when $A\rightarrow1$. The limit for $A=1$ is the Taub--Nut metric on~$\mathbb{R}^4$ which is generated by the function $f_1(z)=1+|z|$.

 There is a symmetry around $A=\frac12$: the metrics for $A=\frac12\pm a$ are the same with the orientation reversed, up to scale. So it may seem curious that the limits for $A=0$ and $A=1$ are the selfdual Eguchi--Hanson metric and the Taub-NUT metric. This contradiction is solved by understanding that these are limits at different scales: the Taub-NUT metric is obtained when $A\rightarrow1$ by shrinking the 2-sphere to a point, and by rescaling there is a bubble which is~$\mathcal{O}(-1)$ with the 2-cover of the Eguchi--Hanson metric. This is precisely what we see on the other side $A\rightarrow0$, with the opposite orientation.

Finally notice the change of topology and of orientation at $\big(A = \frac{1}{2}, \ell = 0, b = \frac{1}{2}\big)$, encoding the (Riemannian) Schwarzschild metric, which lives on the product $S^2 \times \mathbb{R} ^2$, with its natural two orientations.

\section{The Chen--Teo family}

The Chen--Teo 4-parameter family is actually relevant to the general treatment of the preceding section, i.e., is included and probably coincides with the family of toric Hermitian ALF gravitational instantons, with $r = 3$, when the $z$-axis admits 3 angular points, $z_1 < z _2 < z _3$.

The convex piecewise affine function $f$ as then the following general form:
\begin{gather} \label{f-3} f (z) = A + \frac{1}{2} (1 - p) |z - z _1| + \frac{1}{2} (p + q) |z - z_2| + \frac{1}{2} (1 - q) |z - z _3|, \end{gather}
where
\[ - 1 < - p < q < 1, \]
are the slopes of $f$, on the open intervals $(- \infty, z_1)$, $(z _1, z_2)$, $(z_2, z_3)$, $(z _3, \infty)$ respectively. The pair $(p, q)$ then belongs to the open domain of $\mathbb{R} ^2$ defined by
\[ %\label{domain}
 - 1 < p < 1, \qquad - 1 < q < 1, \qquad p + q > 0.\]
We denote $f _1 := f (z _1)$, $f_2 := f(z_2) $, $f_3 := f (z_3)$, and, in addition to $p$, $q$, we introduce two positive parameters $a$, $b$ by
\[% \label{ab}
 a := f_1 ^2/f_2 ^2, \qquad b := f_3 ^2/f_2 ^2. \]
Alternatively,
\begin{gather} \label{ab-pq} \sqrt{a} - 1 = p \frac{(z_2 - z_1)}{f_2}, \qquad \sqrt{b} - 1 = q \frac{(z_3 - z_2)}{f_2}. \end{gather}
Then $a > 1$ if $p > 0$, $a <1$ if $p < 0$ and $a = 1$ if $p = 0$; similarly, $b > 0$ if $q > 0$, $b < 1$ if $q < 0$ and $b = 1$ if $q = 0$, and
\begin{gather} \label{lim} \lim _{p \to 0}{\frac{(a - 1)}{p}} = \frac{2 (z_2 - z_1)}{f_2}, \qquad \lim_{q \to 0}{\frac{(b - 1)}{q}} = \frac{2 (z_3 - z_2)}{f_2}. \end{gather}
Notice that the parameters $a$, $b$, as well as the parameters $p$, $q$, are insensitive to the transform described in Remark \ref{rem-change}.

From \eqref{f-3}, we get $f_2 = A + \frac{f_2}{2} \big(\frac{(\sqrt{a} - 1)}{p} (1 - p) + \frac{(\sqrt{b} - 1)}{q} (1 - q)\big)$, hence
\[% \label{A}
A = f_2 \frac{\big(p + q - \sqrt{a} q (1 - p) - \sqrt{b} p (1 - q)\big)}{2 p q} =
\frac{1}{2} \big(f _2 \big(\sqrt{a} + \sqrt{b}\big) - (z _3 - z_1)\big). \]

\begin{rem} \label{rem-normalised} For further use, it will be convenient to ``normalize'' the convex piecewise affine function $f (z)$, via the transform described in Remark \ref{rem-change}, in order that $z _2 = 0$ and $f _2 = 1$, hence $f_1 = \sqrt{a}$, $f_3 = \sqrt{b}$. The convex piecewise affine function $f (z)$ is then given by \eqref{f-3}, with
\begin{gather*}
 %\label{A-norm}
 A = \frac{\sqrt{a} + \sqrt{b}}{2} - \frac{1}{2} \left(\frac{\sqrt{a} - 1}{p} + \frac{\sqrt{b} - 1}{q}\right), \qquad z _1 = \frac{1 - \sqrt{a}}{p}, \qquad z _2 = 0, \qquad z _3 = \frac{\sqrt{b} - 1}{q}. \end{gather*}
\end{rem}

\subsection{Regularity} As in the introduction, we call a metric \emph{regular} if on some smooth compactification it has at worst conical singularities. In order to test the regularity of these metrics, we introduce the angles $2 \pi \alpha _0$, $2 \pi \alpha _1$, $2 \pi \alpha _2$, $2 \pi \alpha _3$, attached to each divisor, where the $\alpha _i$ are all positive, and we consider the corresponding Killing vector fields, when $p \neq 0$, $q \neq 0$,
\begin{alignat}{3}
 & v _0 = - \alpha _0 \left(\frac{\partial}{\partial x _3} + F _0 \frac{\partial}{\partial t}\right), \qquad && v _1 = - p \alpha _1 \left(\frac{\partial}{\partial x _3} + F _1 \frac{\partial}{\partial t}\right),& \nonumber \\
& v _2 = q \alpha _2 \left(\frac{\partial}{\partial x _3} + F _2 \frac{\partial}{\partial t}\right), \qquad && v _3 = \alpha _3 \left(\frac{\partial}{\partial x _3} + F _3 \frac{\partial}{\partial t}\right),& \label{v-i}
 \end{alignat}
where, for $i = 0, 1, 2, 3$, $F _i$ denotes the (constant) value of $F$ in the interval $(z_i, z _{i + 1})$ on the axis~$\rho = 0$, cf.\ \cite[Lemma 7.1]{BG}. The regularity conditions are then
\[ \begin{pmatrix} v_0 \\ v_1 \end{pmatrix} = \begin{pmatrix} \ell _1 & - \epsilon _1 \\ 1 & 0 \end{pmatrix} \begin{pmatrix} v_1 \\ v_2 \end{pmatrix}, \qquad \begin{pmatrix} v_1 \\ v_2 \end{pmatrix} = \begin{pmatrix} \ell _2 & - \epsilon _2 \\ 1 & 0 \end{pmatrix} \begin{pmatrix} v_2 \\ v_3 \end{pmatrix},
\]
where $\ell _1$, $\ell _2$ are integers, and $\epsilon _1$, $\epsilon _2$ are equal to $\pm 1$, hence
\[ \ell _1 v_1 = v_0 + \epsilon _1 v_2, \qquad \ell _2 v_2 = v_1 + \epsilon _2 v_3, \]
or else, in view of \eqref{v-i},
\begin{gather} \ell _1 p \alpha _1 + \epsilon _1 q \alpha _2 = \alpha _0, \label{C3-1}\\
 p \alpha _1 + \ell _2 q \alpha _2 = \epsilon _2 \alpha _3,\label{C3-2} \\
 \ell _1 p \alpha _1 F_1 + \epsilon _1 q \alpha _2 F_2 = \alpha _0 F_0, \label{C3-3}\\
 p \alpha _1 F_1 + \ell _2 q \alpha _2 F_2 = \epsilon _2 \alpha _3 F_3. \label{C3-4}\end{gather}
In view of \eqref{C3-1}, in \eqref{C3-3} the $F_i$ may be replaced by $F_i + c$ for any constant $c$, and likewise in~\eqref{C3-4} in view of \eqref{C3-2}. Also recall, cf.\ \cite[Proposition 7.3]{BG}, that the $F_i$ are related by
\begin{align}
& F_1 - F_0 = - \frac{f_1 ^2}{A} \frac{(1 - p)}{p} = - \frac{f_2 ^2}{A} \frac{a (1 - p)}{p},\nonumber
\\& F_2 - F_1 = \frac{f_2^2}{A} \frac{(p + q)}{p q},\nonumber \\&
F_3 - F _2 = - \frac{f_3 ^2}{A} \frac{(1 - q)}{q} = - \frac{f_2^2}{A} \frac{b (1 - q)}{q}. \label{F-F} \end{align}
In particular,
\begin{gather} \label{F3-F0} A (F_3 - F _0) = \frac{f_2 ^2}{p q} \big(p + q - a q (1 - p) - b p (1 - q)\big). \end{gather}

As observed above, in view of \eqref{C3-1}--\eqref{C3-2}, \eqref{C3-3}--\eqref{C3-4} can be rewritten as
\begin{gather} \ell _1 p (F_1 - F_0) \alpha _1 + \epsilon _1 q (F_2 - F_0) \alpha _2= 0, \label{C3-3bis}\\
\epsilon _2 p (F_1 - F_3) \alpha _1 + \epsilon _2 \ell _2 q (F_2 - F_3) \alpha _2 = 0. \label{C3-4bis}
\end{gather}
Since $\alpha _1$ and $\alpha _2$ are both positive, it follows that
\[(F_2 - F_0) (F_1 - F_3) = \epsilon _1 \ell _1 \ell _2 (F_1 - F_0) (F_2 - F_3), \]
hence
\[ \frac{(F_2 - F_0) (F_1 - F_3)}{(F_1 - F _0) (F _2 - F_3)} = \epsilon _1 \ell _1 \ell _2, \]
or, equivalently,
\begin{gather} \label{pre-n} {\bf n} := \frac{(F_3 - F_0) (F_1 - F_2)}{(F_1 - F _0) (F _2 - F_3)} = \epsilon _1 \ell _1 \ell _2 - 1. \end{gather}
In view of \eqref{F-F}--\eqref{F3-F0}, ${\bf n}$, defined by \eqref{pre-n}, has the following expression:
\begin{gather} \label{bfn} {\bf n} = \frac{(p + q) (p + q - a q (1 - p) - b p (1 - q))}{a q (1 - p) b p (1 - q)}, \end{gather}
and will be called the {\it normalized total NUT-charge}, cf.\ \cite[Section~III.B]{CT3}. By \eqref{pre-n}, ${\bf n}$ is then an {\it integer}, whenever the metric is regular. Notice that ${\bf n} = 0$ if and only if $F _3 - F _0 = 0$, i.e., if and only if the metric is AF.
\begin{rem} \label{rem-ab} {\rm Notice that \eqref{bfn} can be rewritten as
\[{\bf n} = \frac{(p + q)}{a b (1 - p) (1 - q)} \left(a + b - \frac{(a - 1)}{p} - \frac{(b - 1)}{q}\right).\]
 It follows from \eqref{bfn} and \eqref{ab-pq} that ${\bf n}$ is well defined at $p = 0$ or $q = 0$ and that the quantity \smash{$a + b - \frac{(a - 1)}{p} - \frac{(b - 1)}{q}$} has the sign of ${\bf n}$. In particular, a regular metric is AF if and only if the parameters $p$, $q$, $a$, $b$ are related by \smash{$a + b - \frac{(a - 1)}{p} - \frac{(b - 1)}{q} = 0$}.
} \end{rem}

From \eqref{C3-1}--\eqref{C3-4}, we infer
\begin{gather} \alpha _1 = \frac{1}{\ell _1 p} \frac{(F_0 - F_2)}{(F_1 - F_2)} \alpha _0 = \frac{1}{\epsilon _2 p} \frac{(F_3 - F_2)}{(F_1 - F_2)} \alpha _3, \label{alpha1-F}\\
\alpha _2 = \frac{1}{\epsilon _1 q} \frac{(F_0 - F_1)}{(F_2 - F_1)} \alpha _0 = \frac{1}{\epsilon _2 \ell _2 q} \frac{(F_3 - F_1)}{(F_2 - F_1)} \alpha _3, \label{alpha2-F} \\
\alpha _3 = \frac{\epsilon _2}{\ell _1} \frac{(F_0- F_2)}{(F_3 - F_2)} \alpha _0 = \frac{\epsilon _2 \ell _2}{\epsilon _1} \frac{(F_0- F_1)}{(F_3 - F_1)} \alpha _0. \label{alpha3-F}\end{gather}
In view of \eqref{F-F}, we then get
\begin{gather} \label{alpha12} \alpha _1 = \epsilon _2 \frac{b (1 - q)}{(p + q)} \alpha _3, \qquad \alpha _2 =
\epsilon _1 \frac{a (1 - p)}{(p + q)} \alpha _0, \end{gather}
from which we infer
\begin{gather} \label{epsilon} \epsilon _1 = \epsilon _2 = 1. \end{gather}
It then follows that
\begin{gather} {\bf n} = \ell _1 \ell _2 - 1,\qquad
v _2 = \ell _1 v _1 - v _0, \qquad v_3 = \ell _2 v_2 - v_1 = {\bf n} v_1 - \ell _2 v _0, \label{v2v3}\end{gather}
so that $v _0 \wedge v _3 = {\bf n} v _0 \wedge v _1$, hence
\[ {\bf n} = \det{(v_0, v _3)}, \]
 since the pair $(v_0, v_1)$ is a basis of the lattice $\Lambda$.
From \eqref{alpha1-F}--\eqref{alpha3-F} and \eqref{F-F}, we easily infer that the integers $\ell _1$, $\ell _2$ can be rewritten as
\begin{gather} \ell _1 = \frac{(p + q - a q (1 - p))}{b p (1 - q)} \frac{\alpha _0}{\alpha _3} = \left(1 + \frac{a q (1 - p)}{p + q} {\bf n}\right) \frac{\alpha _0}{\alpha _3},\label{ell1-def} \\
\ell _2 = \frac{(p + q - b p (1 - q))}{a q (1 - p)} \frac{\alpha _3}{\alpha _0} = \left(1 + \frac{b p (1 - q)}{p + q} {\bf n}\right) \frac{\alpha _3}{\alpha _0}.\label{ell2-def} \end{gather}
From \eqref{alpha12} and \eqref{epsilon}, the conical parameters $\alpha _1$, $\alpha _2$ are given by
\begin{gather} \label{alpha12-def} \alpha _1 = \frac{b (1 - q)}{p + q} \alpha _3, \qquad
 \alpha _2 = \frac{a (1 - p)}{p + q} \alpha _0, \end{gather}
while the relations \eqref{C3-1}, \eqref{C3-2}, \eqref{C3-3bis} and \eqref{C3-4bis} are expressed by
\begin{gather} \ell _1 p \alpha _1 + q \alpha _2 = \alpha _0, \label{R1}\\
 p \alpha _1 + \ell _2 q \alpha _2 = \alpha _3, \label{R2}\\
 - \ell _1 a p (1 - p) \alpha _1 + (p + q - a q (1 - p)) \alpha _2 = 0, \label{R3}\\
 (p + q - b p (1 - q)) \alpha _1 - \ell _2 b q (1 - q) \alpha _2 = 0.\label{R4} \end{gather}
By using the expressions of $\alpha _1$, $\alpha _2$ given by \eqref{alpha12-def}, it is easily checked that these relations are all satisfied.

 So far, we assumed that $p q \neq 0$. In view of \eqref{lim}, the cases when $p = 0$, $q > 0$ or $q = 0$, $p > 0$ are then obtained by continuity. When $p$ tends to $0$, then $q > 0$, since $p + q > 0$, and
\begin{gather*} \ell _1 = (1 + {\bf n}) \frac{\alpha _0}{\alpha _3}, \qquad \ell _2 = \frac{\alpha _3}{\alpha _0}, \qquad
 \alpha _1 = \frac{b (1 - q)}{q} \alpha _3, \qquad \alpha _2 = \frac{1}{q} \alpha _0, \end{gather*}
and
\begin{gather*} {\bf n} = \frac{(1 + q)}{b (1 - q)} - \frac{q}{b (1 - q)} \lim _{p \to 0}{\frac{(a - 1)}{p}} - 1. \end{gather*}

Similarly, when $q$ tends to $0$, then $p > 0$ and
\begin{gather*} \ell _1 = \frac{\alpha _0}{\alpha _3}, \qquad \ell _2 = (1 + {\bf n}) \frac{\alpha _3}{\alpha _0}, \qquad
 \alpha _1 = \frac{1}{p} \alpha _3, \qquad \alpha _2 = \frac{a (1 - p)}{p} \alpha _0 \end{gather*}
and
\begin{gather*} {\bf n} = \frac{(1 + p)}{a (1 - p)} - \frac{p}{a (1 - p)} \lim _{q \to 0}{\frac{(b - 1)}{q}} - 1. \end{gather*}

Recall that a metric of the Chen--Teo 4-parameter family is said to be {\it regular} if it can~be smoothly compactified along the invariant divisors, $D_i$, encoded by the edges $E_i$ of the momentum polytope, $i = 0, \dots, r$, with a suitable choice of conical singularities of angles $2 \pi \alpha _i$ along each~$D_i$. In view of the above, this happens if and only if we can choose $\alpha _0$, $\alpha _1$, $\alpha _2$,~$\alpha _3$, all positive, satisfying the conditions~\eqref{R1}--\eqref{R4}, hence, equivalently, the conditions~\eqref{ell1-def}--\eqref{alpha12-def}, in fact \eqref{ell1-def}--\eqref{ell2-def} only, since $\alpha _1$ and $\alpha _2$ are then be defined by~\eqref{alpha12-def}. We first observe that, in this case, it follows from \eqref{alpha12-def} that $\alpha _1$ and $\alpha _2$ are completely determined by $\alpha _0$ and $\alpha _3$, as $p + q > 0$; moreover, only the quotient $\alpha _0/\alpha _3$ is relevant, so that we can arrange that, say, $\alpha _0 = 1$.
 This being understood, we can formulate the following statement:
\begin{thm} \label{thm-regular}
Let $(M, g)$ be an element of the $4$-parameter Chen--Teo family, of parameter $p$, $q$, $a$, $b$; let ${\bf n}$ be the total NUT-charge of $g$:
\begin{itemize}\itemsep=0pt
 \item[$(1)$] $(M, g)$ is regular $($that is has a smooth compactification on which it has at worst conical singularities$)$ if and only if ${\bf n}$ is an integer.
\item[$(2)$] If $(M, g)$ is regular, then the boundary at infinity is diffeomorphic to $L$, where $L$ is
 \begin{itemize}\itemsep=0pt
 \item[$(i)$] a lens space of type $\ell/{\bf n}$, where $\ell$ is a factor of ${\bf n} + 1$, if ${\bf n} \neq - 1$;
 \item[$(ii)$] the sphere $S ^3$, if ${\bf n} = - 1$; \item[$(iii)$] $S^1 \times S^2$, if ${\bf n} = 0$.
\end{itemize}
\end{itemize}
\end{thm}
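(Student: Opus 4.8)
The plan is to read off part (1) from the regularity relations already derived in this section — namely \eqref{C3-1}--\eqref{C3-4}, \eqref{pre-n}, \eqref{epsilon}, \eqref{v2v3}, \eqref{ell1-def}--\eqref{alpha12-def} and \eqref{R1}--\eqref{R4} — and then, for part (2), to identify the end topologically. Necessity in (1) is immediate: if $g$ is regular the matrix relations \eqref{C3-1}--\eqref{C3-4} hold, hence by \eqref{epsilon} and \eqref{v2v3} (or directly by \eqref{pre-n}) we have ${\bf n}=\ell_1\ell_2-1$ with $\ell_1,\ell_2\in\mathbb Z$, so ${\bf n}\in\mathbb Z$. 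For sufficiency, set $S=p+q>0$, $X=aq(1-p)$, $Y=bp(1-q)$ and $c_1=1+\tfrac XS{\bf n}$, $c_2=1+\tfrac YS{\bf n}$ (the two coefficients appearing in \eqref{ell1-def}--\eqref{ell2-def}); then \eqref{bfn} is exactly the identity $S(X+Y)+{\bf n}XY=S^2$, equivalently $c_1c_2={\bf n}+1$. Assuming ${\bf n}\in\mathbb Z$, choose integers $\ell_1,\ell_2$ with $\ell_1\ell_2={\bf n}+1$ and with the sign of $\ell_i$ equal to that of $c_i$; this is possible because $c_1c_2={\bf n}+1$, and when ${\bf n}=-1$ the identity forces $(X-S)(S-Y)=0$, so one $c_i$ vanishes, the corresponding $\ell_i$ is then $0$ and the other is free. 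Put $\epsilon_1=\epsilon_2=1$, $\alpha_0=1$, $\alpha_3=c_1/\ell_1=\ell_2/c_2>0$ and define $\alpha_1,\alpha_2$ by \eqref{alpha12-def}; these are positive since $a,b>0$ and $1-p,1-q,S>0$. A direct substitution shows that with this data each of \eqref{R1}--\eqref{R4} collapses to the single identity $S(X+Y)+{\bf n}XY=S^2$, hence $g$ is regular. The degenerate cases $p=0$ or $q=0$ follow by continuity, using \eqref{lim}.

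For part (2), the boundary at infinity $L$ is the $\mathbb T^2$-invariant $3$-manifold lying over a curve in the moment polytope close to the edge $E_\infty$: over its interior the torus acts freely, at the end meeting $E_0$ the circle generated by $v_0$ collapses, and at the end meeting $E_3$ the circle generated by $v_3$ collapses, because $D_0$ and $D_3$ are the two non-compact (punctured-sphere) divisors adjacent to $E_\infty$, carrying the primitive vanishing vectors $v_0,v_3$ of \eqref{v-i}. Thus $L$ is a union of two solid tori glued along a common $\mathbb T^2\cong\mathbb R^2/\Lambda$ with meridians $v_0$ and $v_3$, and its diffeomorphism type is the standard one determined by the class of $v_3$ in $H_1(\mathbb T^2)\cong\Lambda$ relative to the $\mathbb Z$-basis $(v_0,v_1)$. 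By \eqref{v2v3}, $v_3={\bf n}v_1-\ell_2v_0$, so $L$ is the lens space $L({\bf n},-\ell_2)$; since $\ell_1\ell_2={\bf n}+1\equiv1\pmod{\bf n}$ one has $L({\bf n},-\ell_2)\cong L({\bf n},\ell_2)\cong L({\bf n},\ell_1)$, i.e.\ $L$ is of type $\ell/{\bf n}$ for $\ell$ either of the factors $\ell_1,\ell_2$ of ${\bf n}+1$. If ${\bf n}=0$, primitivity of $v_3$ forces $v_3=\pm v_0$, so the two solid tori are glued along their meridians and $L\cong S^1\times S^2$; if ${\bf n}=\pm1$, then $(v_0,v_3)$ is a $\mathbb Z$-basis of $\Lambda$ and the gluing gives $S^3$, which in particular covers ${\bf n}=-1$.

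The main obstacle is part (2): correctly recognising the $3$-manifold at infinity from the polytope — that $D_0$ and $D_3$ are precisely the divisors meeting $E_\infty$ and that $v_0,v_3$ are their meridians — and then reading off the lens-space invariant while keeping track of orientations via the equivalence $L(m,q)\cong L(m,q')$ for $q'\equiv\pm q^{\pm1}\pmod m$. By contrast part (1) is essentially an organisation of relations already in hand, the only delicate point being the sign choices needed to keep all the $\alpha_i$ positive.
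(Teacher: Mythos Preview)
Your proof is correct and follows the same approach as the paper. For (1) both arguments reduce sufficiency to the product identity $c_1c_2={\bf n}+1$ (the paper's \eqref{prod}) and then pick a factorization ${\bf n}+1=\ell_1\ell_2$ with signs matching $c_1,c_2$ --- your version is slightly slicker in that it bypasses the paper's extra case analysis showing $c_1,c_2$ cannot both be negative when ${\bf n}\ge0$; for (2) both identify the end as two solid tori with meridians $v_0$ and $v_3={\bf n}v_1-\ell_2v_0$, the paper invoking the Poincar\'e half-plane picture from \cite[Proposition~4.1]{BG} while you read it directly from the polytope near $E_\infty$, and you add the lens-space equivalence $L({\bf n},-\ell_2)\cong L({\bf n},\ell_1)$ explicitly.
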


\begin{proof} (i) We already know that ${\bf n}$ is an integer whenever $g$ is regular. For the converse, in view of the above, we simply have to show that if ${\bf n}$ is an integer there always exist $\alpha _0$, $\alpha _3$ positive, in fact only $\alpha _3 > 0$ if we choose $\alpha _0 = 1$, satisfying the conditions \eqref{ell1-def}--\eqref{ell2-def}, where $\ell _1$, $\ell _2$ is some pair of integers such that ${\bf n} = \ell _1 \ell _2 - 1$. From \eqref{bfn}, we infer that
\begin{gather} \left(1 + \frac{a q (1 - p)}{p + q} {\bf n}\right) \left(1 + \frac{b p (1 - q)}{p + q} {\bf n}\right) = \ell _1 \ell _2 = {\bf n} + 1, \label{prod}\\
 \left(1 + \frac{a q (1 - p)}{p + q} {\bf n}\right) = \frac{p + q - a q (1 - p)}{b p (1 - q)}, \label{eq-a} \end{gather}
and
\begin{gather} \label{eq-b} \left(1 + \frac{b p (1 - q)}{p + q} {\bf n}\right) = \frac{p + q - b p (1 - q)}{a q (1 - p)}. \end{gather}

If ${\bf n} \geq 0$, hence $\ell _1 \ell _2 > 0$, it follows from \eqref{prod} that either $\big(1 + \frac{a q (1 - p)}{p + q} {\bf n}\big)$ and $\big(1 + \frac{b p (1 - q)}{p + q} {\bf n}\big)$ are both positive or both negative; the second case is in fact excluded, due to the constraints on the parameters $p$, $q$, $a$, $b$: indeed, since ${\bf n} \geq 0$, if $p >0$, $q >0$, then $\big(1 + \frac{a q (1 - p)}{p + q} {\bf n}\big)$ and~$\big(1 + \frac{b p (1 - q)}{p + q} {\bf n}\big)$ are clearly both positive, and this is still the case if $p \geq 0$, $q < 0$, because of~\eqref{eq-a}, or if $p < 0$, $q \geq 0$, because of \eqref{eq-b}. Thus, $\ell _1$, $\ell _2$ are both positive, and $\alpha _3$ is then defined by \eqref{ell1-def}--\eqref{ell2-def}.

If ${\bf n} < - 1$, hence $\ell _1 \ell _2 < 0$, then either $\big(1 + \frac{a q (1 - p)}{p + q} {\bf n}\big) > 0$ and $\big(1 + \frac{b p (1 - q)}{p + q} {\bf n}\big) < 0$ or vice versa. In the former case, we can choose $\ell _1 > 0$, $\ell _2 < 0$, in the latter case, chose instead $\ell _1 < 0$, $\ell _2 > 0$, and, in both cases, define $\alpha _3$ by $\alpha _3 = \frac{p + q - a q (1 - p)}{\ell _1 b p (1 - q)} = \frac{\ell _2 a q (1 - p)}{p + q - b p (1 - q)}$.

If ${\bf n} = - 1$, then $\ell _1 \ell _2 = 0$, so that either $\ell _1 = 0$ or $\ell _2 = 0$ or both. In the former case, we can define $\alpha _3 = \frac{\ell _2 (p + q)}{p + q - b p (1 - q)}$, where $\ell _2$ may be any integer of the sign of $p + q - b p (1 - q)$, and likewise if $\ell _2 = 0$. The most interesting case is when $\ell _1 = \ell _2 = 0$, i.e., when $p + q = a q (1 - p) = b p (1 - q)$; then, we can choose $\alpha _3 = \alpha _0 = 1$, $a = \frac{p + q}{q (1 - p)}$, $b = \frac{p + q}{p (1 - q)}$, $\alpha _1 = \frac{1}{p}$, $\alpha _2 = \frac{1}{q}$.

(ii) If $(M, g)$ is regular, we know by \eqref{v2v3} that $v _3 = {\bf n} v _1 - \ell _2 v _0$. According to Proposition 4.1 in \cite{BG}, the function $z + i \rho$ identifies the interior of the moment polytope $P$, equipped with the complex structure induced by $g$, with the Poincar\'e upper half-plane. At infinity, the topology of~$(M, g)$ is then $\mathbb{R} \times L$, where $L$ is obtained, from the product $[0, 1] \times \mathbb{T} ^2$, by identifying the circle $\{0\} \times S^1$ with the circle $\{1\} \times S^1$ via the rotation $2 i \pi \frac{\ell _2}{{\bf n}}$, where $\{0\} \times S^1$ encodes the orbit of $v _0$, around $E _0$, and $\{1\} \times S^1$ the orbit of $v _3 = {\bf n} v _1 - \ell _2 v _0$, around $E _3$, cf.\ \cite{hatcher,saveliev} and Figure~\ref{fig:lens}.
\end{proof}

 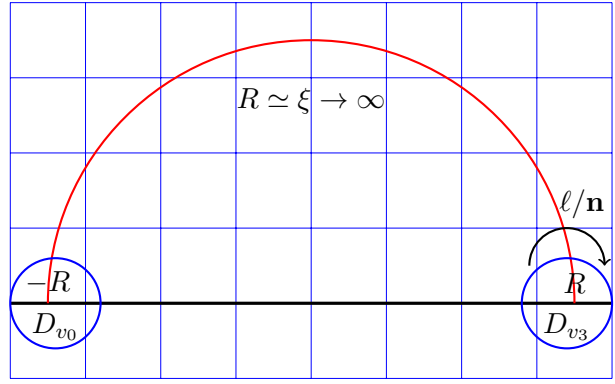
\begin{figure}[t]\centering
\begin{tikzpicture}
\draw[step = 1cm,blue,very thin] (-4,-1) grid (4,4);
\draw[very thick,-](-4,0)--(4,0);
\draw[thick, red] (3.5,0) arc (0:180:3.5);
\draw[thick,blue] (-3.4,0) ellipse (0.6 and 0.6);
\draw[thick,blue] (3.4,0) ellipse (0.6 and 0.6);
\node[below] at (0,3) {$R \simeq \xi \to \infty$};
\node[below] at (-3.4,0) {$D_{v _0}$};
\node[below] at (3.4,0) {$D_{v _3}$};
\draw[thick,black,<-] (3.9,0.5) arc (0:180:0.5);
\node[above] at (3.6,1) {$\ell/{\bf n}$};
\node[above] at (-3.5,0) {$- R$};
\node[above] at (3.5,0) {$R $};
\end{tikzpicture}
\caption{Lens space at infinity, obtained ``by attaching two solid tori $S^1 \times D^2$ together by a diffeomorphism $S^1 \times \partial D^2 \to S^1 \times \partial D^2$ sending a meridian $\{x\} \times \partial D^2$ to a circle of slope $\ell/{\bf n}$'', cf.\ \cite{hatcher}. The disk~$D _{v _0}$, resp.\ $D _{v_3}$, is formed by the orbits of the Killing vector field~$v_0$, resp.~$v _3$. The red half-circle is the hyperbolic geodesic in the Poincar\'e upper half-plane relating $- R$ to $R$ on the real axis, and $R$ tends to infinity.}
\label{fig:lens}
\end{figure}

\subsection{The case when the metric is smooth} \label{ss-smooth} If $\alpha _0 = \alpha _1 = \alpha _2 = \alpha _3 = 1$, i.e., if $(M, g)$ is a gravitational instanton, the system \eqref{R1}--\eqref{R4} becomes
\begin{gather} \ell _1 p + q = 1, \qquad p + \ell _2 q = 1, \label{smooth-1}\\
 p + q - a (1 - p) = 0, \qquad p + q - b (1 - q) = 0. \label{smooth-2}\end{gather}
By \eqref{smooth-1}, $p + q = 1 - (\ell _1 - 1) p = 1 - (\ell _2 - 1) q$. From \eqref{smooth-2}, we then infer
\[\frac{a - 1}{p} = \frac{2 - \ell _1}{1 - p}, \qquad \frac{b - 1}{q} = \frac{2 - \ell _2}{1 - q}. \]
It follows that $\ell _1 < 2$ and $\ell _2 < 2$. From \eqref{smooth-1}, we infer that $\ell _1 p = 1 - q > 0$ and $\ell _2 q = 1 - p > 0$. If $p > 0$ and $q > 0$, hence $\ell _1 = \ell _2 = 1$, we thus get ${\bf n} = 0$, $p + q = 1$, $a = \frac{1}{q}$, $b = \frac{1}{p}$, which characterizes the Chen--Teo instanton, cf.\ \eqref{f-CT}--\eqref{fi-CT}.
If $p > 0$ and $q < 0$, then $\ell _1 > 0$, hence $\ell _1 = 1$, so that $p + q = 1$ and $q = \ell _2 q$, which is impossible, since $\ell _2 q > 0$. Similarly, we cannot have $p < 0$ and $q > 0$. We thus recover the fact, already established in \cite[Section~7]{BG}, that the only toric Hermitian ALF gravitational instantons with 3 angular points are the Chen--Teo gravitational instantons.

\subsection{Some particular cases in the general ALF case} \label{sec:some-part-cases}

 If $\alpha _0 = \alpha _3 = 1$, then, by \eqref{R1}--\eqref{R2}, we get
\[ \ell _1 p \alpha _1 + q \alpha _2 = 1, \qquad p \alpha _1 + \ell _2 q \alpha _2 = 1, \]
while, by \eqref{alpha12-def}, we have
\[ a = \frac{(p + q) \alpha _2}{1 - p}, \qquad b = \frac{(p + q) \alpha _1}{1 - q}. \]
From \eqref{ell1-def}--\eqref{ell2-def}, we also infer
\begin{gather*} \ell _1 = \frac{1 - q \alpha _2}{p \alpha _1} = 1 + q \alpha _2 {\bf n}, \qquad \ell _2 = \frac{1 - p \alpha _1}{q \alpha _2} = 1 + p \alpha _1 {\bf n}, \\
 {\bf n} = \frac{\ell _1 - 1}{q \alpha _2} = \frac{\ell _2 - 1}{p \alpha _1} = \frac{1 - p \alpha _1 - q \alpha _2}{p q \alpha _1 \alpha _2}, \end{gather*}
and\
\[ \frac{a - 1}{p} = \frac{1 - \ell _1 \alpha _1 - \alpha _2}{1 - p}, \qquad \frac{b - 1}{q} = \frac{1 + \alpha _1 - \ell _2 \alpha _2}{1 - q}. \]

\begin{pc} \label{pc-alpha-032} We first consider the case when, in addition to $\alpha _0 = \alpha _3 = 1$, we suppose that $\alpha _2 = 1$, and we then put: $\alpha _1 = \alpha$ (similar developments can be done, by simply swapping~$p$ and $q$ if we suppose instead that $\alpha _1 = 1$ and $\alpha _2 = \alpha$). We then have
\begin{gather} \label{1-alpha} \ell _1 p \alpha + q = 1, \qquad p \alpha + \ell _2 q = 1, \end{gather}
and
\begin{gather*} a = \frac{p + q}{1 - p}, \qquad b = \frac{(p + q) \alpha}{1 - q},\\
\ell _1 = 1 + q {\bf n} = \frac{1 - q}{p \alpha}, \qquad \ell _2 = 1 + p \alpha {\bf n} = \frac{1 - p \alpha}{q},\\
 {\bf n} = \frac{\ell _1 - 1}{q} = \frac{\ell _2 - 1}{p \alpha} = \frac{1 - p \alpha - q}{p q \alpha}, \end{gather*}
and
\[\frac{a - 1}{p} = \frac{2 - \ell _1 \alpha}{1 - p}, \qquad \frac{b - 1}{q} = \frac{1 + \alpha - \ell _2}{1 - q}. \]
 \end{pc}

Interesting 1-parameter families are obtained by taking $q = 0$, from which we infer: $p > 0$, hence $\frac{1}{2} < p < 1$ -- since we have then $a > 1$; from \eqref{1-alpha} we also get $p \alpha = 1$, hence $1 < \alpha < 2$, and $\ell _1 = 1$, hence ${\bf n} = \ell _2 - 1$; we also infer: $a = \frac{p}{1 - p}$, $b = 1$, $\frac{a - 1}{p} = \frac{2 - \alpha}{1 - p}$ and $\frac{b - 1}{q} = 1 + \alpha - \ell_2$. For any ${\bf n} = \ell _2 - 1$, we thus get a 1-parameter family of regular metrics parametrised either by~$p \in (\frac{1}{2}, 1)$ or, equivalently, by the angle $2\pi\alpha \in (2\pi, 4\pi)$.

When $p$ tends to $1$, i.e., when $\alpha$ tends to 1, for any $\ell _2$, $a$ tends to $+ \infty$, $b = 1$ and \smash{$\frac{\sqrt{b} - 1}{q}$} tends to \smash{$\frac{2 - \ell _2}{2}$}; in view of Remark \ref{rem-normalised}, the metric then tends to the metric encoded by the affine piecewise function
\[
f ^{\alpha = 1} (z) = 1 - \frac{(2 - \ell _2)}{4} + \frac{1}{2} \Big|z + \frac{(2 - \ell _2)}{4}\Big| + \frac{1}{2} \Big|z - \frac{(2 - \ell _2)}{4}\Big|. \]

When $p$ tends to $\frac{1}{2}$, i.e., when $\alpha$ tends to $2$, for any $\ell _2$, $a = 1$, implying that $z _1 = z _2$ and~$\frac{\sqrt{a} - 1}{p} = 0$, and $\frac{\sqrt{b} - 1}{q}$ tends to $\frac{3 - \ell _2}{2}$. In view of Remark \ref{rem-normalised} again, the metric then tends to the metric encoded by the piecewise affine function
\[%\label{f-ell2-2}
 f ^{\alpha = 2} (z) = 1 - \frac{(3 - \ell _2)}{4} + \frac{1}{2} \Big|z + \frac{(3 - \ell _2)}{4}\Big| + \frac{1}{2} \Big|z - \frac{(3 - \ell _2)}{4}\Big|. \]
This limit when the angle goes to $4\pi$ corresponds to the process described in \cite[Section~9]{BG} where the $S^2$ with the conical singularity disappears at the limit $4\pi$ with a bubble which should be the 2-cover of the self-dual Eguchi--Hanson metric (see the family of Section~\ref{sec:self-dual-eguchi} for an example of this phenomenon).

By successively considering the particular cases when $\ell _2 = 2$, ${\bf n} = 1$, $\ell _2 = 0$, ${\bf n} = - 1$, $\ell _2 = - 1$, ${\bf n} = - 2$ and the AF case $\ell _2 = 1$, ${\bf n} = 0$, we thus get the following 1-parameter families.
\begin{itemize}\itemsep=0pt
 \item[(i)] $\ell _2 = 2$, ${\bf n} = 1$: $f^{\alpha = 1} (z) = 1 + |z|$, which encodes the self-dual Taub-NUT gravitational instanton, and $f ^{\alpha = 2} (z) = \frac{3}{4} + \frac{1}{2} |z + \frac{1}{4}| + \frac{1}{2} |z - \frac{1}{4}|$, which encodes the positive Taub-bolt metric.
\item[(ii)] $\ell _2 = 0$, ${\bf n} = - 1$: $f^{\alpha = 1} (z) = \frac{1}{2} + \frac{1}{2} |z + \frac{1}{2}| + \frac{1}{2} |z - \frac{1}{2}|$, which encodes the Schwarzschild gravitational instanton, and $f^{\alpha = 2} (z) = \frac{1}{4} + \frac{1}{2} |z + \frac{3}{4}| + \frac{1}{2} |z - \frac{3}{4}|$, which encodes the negative Taub-bolt metric.
\item[(iii)] $\ell _2 = - 1$, ${\bf n} = - 2$: $f^{\alpha = 1} (z) = \frac{1}{4} + \frac{1}{2} |z + \frac{3}{4}| + \frac{1}{2} |z - \frac{3}{4}|$, which encodes the negative Taub-bolt gravitational instanton, and $f^{\alpha = 2} (z) = \frac{1}{2} |z + 1| + \frac{1}{2} |z - 1|$, which encodes the self-dual Eguchi--Hanson metric.
\item[(iv)] $\ell _2 = 1$, ${\bf n} = 0$ (this is an AF case): $f^{\alpha = 1} (z) = \frac{3}{4} + \frac{1}{2} |z + \frac{1}{4}| + \frac{1}{2} |z - \frac{1}{4}|$, which encodes the positive Taub-bolt gravitational instanton, and $f^{\alpha = 2} (z) = \frac{1}{2} + \frac{1}{2} |z + \frac{1}{2}| + \frac{1}{2} |z - \frac{1}{2}|$, which encodes the Schwarzschild metric.
\end{itemize}

\subsection{The AF case} \label{ss-AF}
As mentioned above, the normalized total NUT-charge ${\bf n}$ is equal to zero if and only if the metric is AF. We then have $\ell _1 \ell_2 = 1$, while it follows from \eqref{ell1-def}--\eqref{ell2-def} that $\ell _1 = \ell _2 ^{-1} = \frac{\alpha _0}{\alpha _3}$. Since $\ell _1$ and $\ell _2$ are both positive integers, we eventually infer that
\[ \ell _1 = \ell _2 = 1, \]
so that
\[ \alpha _0 = \alpha _3. \]
The conditions \eqref{R1}--\eqref{R4} then become
\begin{gather*}%\label{R1-AF}\label{R3-AF}\label{R4-AF}
p \alpha _1 + q \alpha _2 = \alpha _0 = \alpha _3, \\
 - a p (1 - p) \alpha _1 + (p + q - a q (1 - p)) \alpha _2 = 0, \\
 (p + q - b p (1 - q)) \alpha _1 - b q (1 - q) \alpha _2 = 0. \end{gather*}

Without loss of generality, we can suppose that $\alpha _0 = \alpha _3 = 1$. We thus get
\[ p \alpha _1 + q \alpha _2 = 1, \]
as well as
\[%\label{ab-AF}
 a = \frac{(p + q)}{1 - p} \alpha _2, \qquad b = \frac{(p + q)}{1 - q} \alpha _1. \]

 \begin{pc} \label{pc-AF1} An interesting case is when $\alpha _1 = \alpha _2 =: \alpha > 0$. This happens if and only~if
\[ a = \frac{1}{1 - p}, \qquad b = \frac{1}{1 - q}, \]
 and then
 \[ \alpha = \frac{1}{p + q}. \]
 If $p + q = 1$, i.e., if $\alpha$ tends to $1$, we thus recover the Chen--Teo instanton. If, however, $p + q$ tends to $2$, i.e., if both $p$ and $q$ tend to $1$, then $\alpha$ tends to $\frac{1}{2}$, while the normalised piecewise affine function, cf.\ Remark \ref{rem-normalised}, tends to $f (z) = 1 + |z|$; we thus obtain a quotient by $\mathbb{Z}/2 \mathbb{Z}$ of the self-dual Taub-NUT space.
Finally, if $p + q$ tends to $0$, i.e., $p$ and $q$ both tend to $0$ and $\alpha$ then tends to $+ \infty$, then $a$ and $b$ both tend to $1$, $\frac{\sqrt{a} - 1}{p} = \frac{\sqrt{b} - 1}{q} = \frac{1}{2}$, and the piecewise affine function tends to $f_{\rm EH} (z) = \frac{1}{2} |z + \frac{1}{2}| + \frac{1}{2} |z - \frac{1}{2}|$, which encodes the self-dual Eguchi--Hanson metric, cf.~Section~\ref{sec:self-dual-eguchi}.
\end{pc}

 \begin{pc} \label{pc-AF2} An interesting case is with $\alpha _0 = \alpha_2 = \alpha _3 = 1$, and we then put: $\alpha _1 =: \alpha$. We thus get
 \[ a = \frac{p + q}{1 - p}, \qquad b = \frac{p + q}{p} \qquad \text{and}\qquad \alpha = \frac{1 - q}{p}. \]

 Interesting 1-parameter families are obtained by fixing the parameter $q$ (this actually amounts to fixing the asymptotic behavior of the metric). Then $\frac{1-q}2<p<1$ (the first inequality coming from $a>1$). We get a family of AF examples which are smooth except for one conical singularity along a $S^2$ with angle $2\pi\alpha\in(2\pi(1-q),4\pi)$; $q$ being fixed, this family is parametrised either by~$p$, or by $\alpha$, or, better by $\tau := \alpha - 1$, so that $- q < \tau < 1$. In view of Remark \ref{rem-change}, for each value of $\tau$ it is easy to check that the corresponding metric is encoded by the following piecewise affine function
 \begin{align*} f ^{q, \tau} (z) ={} & \frac{(1 + q \tau)^{\frac{1}{2}}}{2 q (1 - q)} \big((1 + q \tau)^{\frac{1}{2}} - q (q + \tau)^{\frac{1}{2}} - (1 - q) ^{\frac{3}{2}}\big) \\ & + \frac{(q + \tau)}{2 (1 + \tau)} \bigg|z + \frac{(1 - \tau ^2)}{(q + \tau) ^{\frac{1}{2}} \big((1 + q \tau) ^{\frac{1}{2}} + (q + \tau)^{\frac{1}{2}}\big)}\bigg| + \frac{(1 + q \tau)}{2 (1 + \tau)} |z| \\ & + \frac{(1 - q)}{2} \bigg\vert z - \frac{(1 + \tau)}{(1 - q) ^{\frac{1}{2}} \big((1 + q \tau)^{\frac{1}{2}} + (1 - q)^{\frac{1}{2}}\big)}\bigg\vert.
\end{align*}
 When $\tau = 0$, i.e., $\alpha = 1$ and $p = 1 - q$, the corresponding metric is smooth: it is the Chen--Teo gravitational instanton of parameter $q, p = 1 - q$, whose piecewise affine function is
 \begin{gather*} f ^{\tau = 0} (z) = \frac{1}{2 p q} \big(1 - p ^{\frac{3}{2}} - q ^{\frac{3}{2}}\big) + \frac{q}{2} \bigg|z + \frac{1}{q ^{\frac{1}{2}} (1 + q ^{\frac{1}{2}})}\bigg| + \frac{1}{2} |z| + \frac{p}{2} \bigg|z - \frac{1}{p ^{\frac{1}{2}} (1 + p ^{\frac{1}{2}})}\bigg|. \end{gather*}
 If $\tau = -q$, i.e., $\alpha = 1 - q$, we get
 \[ f ^{\tau = - q} (z) = \frac{(1 + q)^{\frac{1}{2}}}{2 q} \big((1 + q) ^{\frac{1}{2}} - (1 - q)\big) + \frac{(1 - q)}{2} |z|
 + \frac{(1 - q)}{2} \bigg|z - \frac{1}{1 + (1 + q) ^{\frac{1}{2}}}\bigg|. \]
 If $\tau = 1$, i.e., $\alpha = 2$, we get
 \begin{align*} f^{\tau = 1} (z) ={} & \frac{(1 + q) ^{\frac{1}{2}}}{2 q} \big((1 + q) ^{\frac{1}{2}} - (1 - q)^{\frac{1}{2}}\big) + \frac{(1 + q)}{2} \vert z\vert \\ & + \frac{(1 - q)}{2} \bigg\vert z - \frac{2}{(1 - q) ^{\frac{1}{2}} \big((1 + q) ^{\frac{1}{2}} + (1 - q)^{\frac{1}{2}}\big)}\bigg\vert.
 \end{align*}
 The limit for the angle $4\pi$ is again obtained by blowing down the $S^2$ to a point and is a Kerr metric. The limit for the angle $2\pi(1-q)$ is a Kerr--Taub-bolt metric with a conical singularity: it changes the topology at infinity because there is a bubble at infinity. The special case $q=0$ was already studied in Section~\ref{sec:some-part-cases}, case~(iv).
\end{pc}

\subsection*{Acknowledgements}
We thank the referees for their careful reading of the article.

\pdfbookmark[1]{References}{ref}
\LastPageEnding

\end{document}